\newtheorem{theorem}{Theorem}[section]
\newtheorem{lemma}[theorem]{Lemma}
\theoremstyle{definition}
\newtheorem{example}[theorem]{Example}
\theoremstyle{remark}
\newtheorem{remark}[theorem]{Remark}
\numberwithin{equation}{section}
\begin{document}

\title[New binary self-dual codes of length $68$ via short Kharaghani array]{%
New extremal binary self-dual codes of length $68$ via short
Kharaghani array over $\mathbb{F}_{2}+u\mathbb{F}_{2}$}

\author{Abidin KAYA }
\address{Department of Computer Engineering, Bursa Orhangazi University,
16310, Bursa, Turkey } \email{abidin.kaya@bou.edu.tr}
\subjclass[2000]{Primary 94B05, 94B99; Secondary 11T71, 13M99}
\keywords{extremal self-dual codes, Gray maps, Kharaghani array,
double-circulant codes, four circulant codes, extension theorems}

\begin{abstract}
In this work, new construction methods for self-dual codes are
given. The methods use the short Kharaghani array and a variation of
it. These are applicable to any commutative Frobenius ring. We apply
the constructions
over the ring $\mathbb{F}_{2}+u\mathbb{F}_{2}$ and self-dual Type I $%
[64,32,12]_2$-codes with various weight enumerators obtained as Gray
images. By the use of an extension theorem for self-dual codes we
were able to construct $27$ new extremal binary self-dual codes of
length $68$. The existence of the extremal binary self-dual codes
with these weight enumerators was previously unknown.
\end{abstract}
\maketitle

\section{Introduction}

Self-dual codes constitute an interesting class of codes since the
appearance of \cite{conway} where an upper bound on the minimum
distance of a binary self-dual code is given. This type of codes are
related to various topics such as design theory, graph theory and
lattice theory. Recently, self-dual codes over rings have been used
to construct new codes. For some
of the works done in this direction we refer the reader to \cite%
{dougherty2,han,karadenizfour,kyp}.

The upper bound on the minimum distance of a binary self-dual code
is finalized in \cite{Rains}. A binary self-dual code meeting the
bound is called extremal. The possible weight enumerators of
self-dual codes of lengths up to $64$ and $72$ have been listed in
\cite{conway}. Since then researchers used different techniques to
construct self-dual codes. Huffman gave a survey on classification
of self-dual codes over various alphabets in \cite{huffman}.
Construction of new self-dual codes and the classification of
self-dual codes have been a dynamic research area. Among those
constructions the ones using circulant matrices are the most
celebrated. In\ \cite{crnkovic}, binary self-dual codes of length
$72$ are constructed by Hadamard designs. Using automorphism groups
is another way to build up self-dual codes. We refer to
\cite{Betsumiya,dougherty1,dougherty3,gurel,kyp} for more
information.

In this work, inspired by a four-block circulant construction in \cite%
{Betsumiya} that uses Goethals-Seidel array we propose a new
construction via short Kharaghani array. A variation of the method
is also given. By using the methods for the ring
$\mathbb{F}_{2}+u\mathbb{F}_{2}$ we construct self-dual codes of
length $32$. As binary images of the extensions of these codes we
were able to construct $27$ new extremal binary self-dual codes of
length $68$. Self-dual codes for these weight enumerators have been
obtained for the firs time in the literature.

The rest of the work is organized as follows. In Section 2, the
preliminaries about the structure of the ring $\mathbb{F}_{2}+u\mathbb{F}%
_{2} $ and the construction from \cite{Betsumiya} that we were
inspired by are given. Section 3 is devoted to the methods we
introduce which use the short Kharaghani array. The computer algebra
system MAGMA \cite{magma} have been used for computation and results
regarding the constructions are given
in Section 4. A substantial number of self-dual Type I $\left[ 64,32,12%
\right] _{2}$-codes and $27$ new extremal binary self-dual codes of length $%
68$ are constructed. Section 5 concludes the paper with some
possible lines of research.

\section{Preliminaries}

Throughout the text let $\mathcal{R}$ be a commutative Frobenius
ring. A linear code $\mathcal{C}$ of length $n$ over $\mathcal{R}$
is an $\mathcal{R} $-submodule of $\mathcal{R}^{n}$. Elements of
$\mathcal{C}$ are called codewords. Codes over $\mathbb{F}_{2}$ and
$\mathbb{F}_{3}$ are called binary and ternary, respectively.
Consider two arbitrary elements $x=\left( x_{1},x_{2},\ldots
,x_{n}\right) $ and $y=\left( y_{1},y_{2},\ldots ,y_{n}\right) $ of
$\mathcal{R}^{n}$. The Euclidean inner product is defined as
$\left\langle x,y\right\rangle _{E}=\sum x_{i}y_{i}$ and in this
paper the duality is understood in terms of the Euclidean inner
product. In other
words the dual of a code $\mathcal{C}$ of length $n$ is denoted as $\mathcal{%
C}^{\bot }$ and defined to be
\[
\mathcal{C}^{\bot }=\left\{ x\in \mathcal{R}^{n}\mid \left\langle
x,y\right\rangle _{E}=0\text{ for all }y\in \mathcal{C}\right\} .
\]%
A code $\mathcal{C}$ is said to be \emph{self-orthogonal} when $\mathcal{%
C\subset C}^{\bot }$ and \emph{self-dual} when $\mathcal{C=C}^{\bot
}$. An even self-dual code is said to be Type II if all the
codewords have weights divisible by $4$, otherwise it is said to be
Type I. For more information on
self-dual codes over commutative Frobenius rings we refer to \cite%
{dougherty3}.

The ring $\mathbb{F}_{2}+u\mathbb{F}_{2}$ is a characteristic $2$
ring of size $4$. The ring is defined as
$\mathbb{F}_{2}+u\mathbb{F}_{2}=\left\{ a+bu|a,b\in
\mathbb{F}_{2},u^{2}=0\right\} $ which is isomorphic to the quotient
$\mathbb{F}_{2}\left[ x\right] /\left( x^{2}\right) $. Type II
codes over $\mathbb{F}_{2}+u\mathbb{F}_{2}$ have been studied in \cite%
{dougherty2}. Some construction methods for self-dual codes over $\mathbb{F}%
_{2}+u\mathbb{F}_{2}$ are given in \cite{han}. Karadeniz et. al.
classified
self-dual four-circulant codes of length $32$ over $\mathbb{F}_{2}+u\mathbb{F%
}_{2}$ in \cite{karadenizfour}. For codes over $\mathbb{F}_{2}+u\mathbb{F}%
_{2}$ a duality preserving linear Gray map is given in
\cite{dougherty2} as follows:
\[
\varphi :\left( \mathbb{F}_{2}+u\mathbb{F}_{2}\right)
^{n}\rightarrow \mathbb{F}_{2}^{2n},\text{ }\varphi \left(
a+bu\right) =\left( b,a+b\right) \text{, \ }a,b\in
\mathbb{F}_{2}^{n}.
\]

In \cite{conway}, Conway and Sloane gave an upper bound on the
minimum Hamming distance of a binary self-dual code which was
finalized by Rains as follows:

\begin{theorem}
$($\cite{Rains}$)$ Let $d_{I}(n)$ and $d_{II}(n)$ be the minimum
distance of a Type I and Type II binary code of length $n$,
respectively. Then
\begin{equation*}
d_{II}(n)\leq 4\lfloor \frac{n}{24}\rfloor +4
\end{equation*}%
and
\begin{equation*}
d_{I}(n)\leq \left\{
\begin{array}{ll}
4\lfloor \frac{n}{24}\rfloor +4 & \text{if $n\not\equiv 22\pmod{24}$} \\
4\lfloor \frac{n}{24}\rfloor +6 & \text{if $n\equiv 22\pmod{24}$.}%
\end{array}%
\right.
\end{equation*}
\end{theorem}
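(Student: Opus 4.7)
The plan is to adopt the Mallows-Sloane invariant-theoretic framework extended via Conway-Sloane shadows, which is the route Rains takes. First I would set up the weight enumerator $W_C(x,y)$ and invoke Gleason's theorem: by self-duality and the MacWilliams identity, $W_C$ is fixed by the substitution $(x,y) \mapsto \frac{1}{\sqrt{2}}(x+y, x-y)$, and the Type II hypothesis that all weights are divisible by $4$ adds invariance under $(x,y) \mapsto (x, iy)$. The resulting invariant ring is $\mathbb{C}[\phi_8, \phi_{24}]$ with $\phi_8 = x^8 + 14 x^4 y^4 + y^8$ and $\phi_{24} = x^4 y^4 (x^4 - y^4)^4$. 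Writing $n = 24m + 8r$ with $r \in \{0,1,2\}$ and expressing $W_C$ as $\sum_{j=0}^{m} a_j \phi_8^{(n-24j)/8} \phi_{24}^j$, one eliminates all low-weight monomials $x^{n-k} y^k$ for $1 \leq k \leq d-1$; the maximal solvable system yields $d \leq 4m + 4$, which is the Mallows-Sloane bound and gives the Type II part.

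For the Type I bound I would pass to the corresponding larger Type I invariant ring (generated by $x^2 + y^2$ together with a degree-$8$ invariant) and, following Conway-Sloane, pair $W_C$ with the shadow weight enumerator $S_C$ arising from the cosets of the doubly-even subcode. The extremal Type I enumerator can be written as a linear combination of invariant monomials and analogous shadow polynomials, and the requirement that \emph{both} $W_C$ and $S_C$ have nonnegative integer coefficients imposes constraints strictly beyond those used in the Type II argument. Processing these constraints by residue class of $n$ modulo $24$ yields $d \leq 4\lfloor n/24 \rfloor + 4$ in every class except $n \equiv 22 \pmod{24}$, while in the exceptional class the shadow conditions turn out to be compatible with $d = 4\lfloor n/24 \rfloor + 6$, so one can only conclude the weaker bound.

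The main obstacle is the shadow analysis in the Type I refinement. Showing that a putative extremal enumerator with $d = 4\lfloor n/24 \rfloor + 6$ forces a negative coefficient somewhere in $S_C$ requires a careful residue-by-residue computation of the low-degree terms of the extremal shadow polynomial, and the case $n \equiv 22 \pmod{24}$ is precisely where this forcing fails. In a self-contained write-up one would also need to handle a handful of small lengths separately by direct classification, since for small $n$ the putative extremal parameters can either be ruled out or realized by hand rather than by the generic invariant-theoretic argument above.
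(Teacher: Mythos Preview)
The paper does not prove this theorem at all: it is quoted verbatim from Rains' article \cite{Rains} as a preliminary result and is used only to define the term \emph{extremal}. There is therefore no proof in the paper to compare your attempt against.

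That said, your sketch is a faithful outline of Rains' original argument---Gleason-type invariant theory yielding the Mallows--Sloane bound in the Type~II case, and the Conway--Sloane shadow enumerator with its nonnegativity constraints to sharpen the Type~I bound residue-by-residue modulo $24$, with $n\equiv 22 \pmod{24}$ emerging as the single class where the forcing fails. If you intend to include a self-contained proof, be aware that the actual work in Rains' paper lies in the explicit coefficient computations for the extremal shadow enumerator; your proposal correctly flags this as the obstacle but does not carry it out, so what you have is a plan rather than a proof.
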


Self-dual codes meeting these bounds are called \emph{extremal}.

For the rest of the work we let $R=\left( r_{ij}\right) $ be the
back
diagonal $\left( 0,1\right) $-matrix of order $n$ satisfying $r_{i,n-i+1}=1$%
, $r_{ij}=0$ if $j\neq n-i+1$. We are inspried by a construction for
self-dual codes given in \cite{Betsumiya} as follows:

\begin{theorem}
\cite{Betsumiya}\label{eightcirculant}Let $A,B,C,D$ be four $n$ by
$n$ circulant matrices satisfying
$AA^{T}+BB^{T}+CC^{T}+DD^{T}=-I_{n}$ then the
code generated by the matrix%
\begin{equation*}
G=\left(
\begin{array}{c}
I_{4n}\
\end{array}%
\begin{array}{|cccc}
A & BR & CR & DR \\
-BR & A & D^{T}R & -C^{T}R \\
-CR & -D^{T}R & A & B^{T}R \\
-DR & C^{T}R & -B^{T}R & A%
\end{array}%
\right)
\end{equation*}%
is a self-dual code.
\end{theorem}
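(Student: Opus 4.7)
The plan is to show that $G = [\,I_{4n}\mid M\,]$ generates a self-dual code by verifying $GG^{T}=0$, which is equivalent to $MM^{T}=-I_{4n}$, where $M$ denotes the $4n\times 4n$ right block of $G$. Once self-orthogonality is established, the presence of $I_{4n}$ in standard form forces the code to have free rank $4n$ in $\mathcal{R}^{8n}$, and over a commutative Frobenius ring this is exactly the condition that makes a self-orthogonal code self-dual.

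The computation of $MM^{T}$ rests on four elementary identities that I would isolate at the outset: (i) $R^{T}=R$ and $R^{2}=I_{n}$; (ii) for every $n\times n$ circulant matrix $X$ one has $RX = X^{T}R$ and $XR = RX^{T}$; (iii) any two $n\times n$ circulant matrices commute; and (iv) as a consequence of (iii), $XX^{T}=X^{T}X$ for circulant $X$. From (i) and (ii), every product of the form $(XR)(YR)^{T}$ appearing in $MM^{T}$ collapses to $XY^{T}$, which removes $R$ from many terms.

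Using these, I would compute the four diagonal blocks of $MM^{T}$ first. For the $(1,1)$ block, the sum is directly $AA^{T}+BB^{T}+CC^{T}+DD^{T}=-I_{n}$ by hypothesis. For the $(2,2)$, $(3,3)$, $(4,4)$ blocks, the same sum reappears modulo replacing some $XX^{T}$ by $X^{T}X$, which is legal by (iv); so every diagonal block equals $-I_{n}$. For the six off-diagonal blocks I expect each to reduce, after applying (i)--(ii), to an expression of the shape $(YX-XY)R + (WZ-ZW)$ with $X,Y,W,Z$ drawn from $\{A,B,C,D,A^{T},B^{T},C^{T},D^{T}\}$, which vanishes by (iii). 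For example, the $(1,2)$ block becomes
\begin{equation*}
-ARB^{T}+BRA^{T}+CD-DC = (BA-AB)R + (CD-DC) = 0,
\end{equation*}
and the remaining five blocks admit entirely analogous calculations, with the signs and the transposes conspiring exactly so that the commutators close up.

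The main obstacle is not conceptual but bookkeeping: there are six off-diagonal blocks, and one must be careful with the placements of transposes, signs, and with distinguishing $RX$ from $XR$ when applying identity (ii). Once these are tabulated cleanly, the conclusion $MM^{T}=-I_{4n}$ is immediate, and the self-duality of $\mathcal{C}$ follows.
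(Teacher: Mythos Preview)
Your proposal is correct, and the block-by-block verification of $MM^{T}=-I_{4n}$ via the identities $R^{2}=I_{n}$, $XR=RX^{T}$, and the commutativity of circulants is exactly the mechanism one needs.

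It is worth noting, however, that the paper does not prove this theorem at all: Theorem~\ref{eightcirculant} is quoted from \cite{Betsumiya} without argument, so there is no ``paper's own proof'' to compare against. That said, your strategy is precisely the one the paper deploys for its own results, Theorems~\ref{short} and~\ref{variation}, where the right block $M$ is squared and the off-diagonal blocks are killed using Lemma~\ref{lemma} (the identity $ARC^{T}-CRA^{T}=0$, which is equivalent to your identities (i)--(ii) combined with commutativity). The only cosmetic difference is that the paper packages the key cancellation as a cited lemma about $\lambda$-reverse-circulant matrices being symmetric, whereas you derive the same cancellations directly from $RX=X^{T}R$. Substantively the arguments are the same.
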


$\lambda $-circulant matrices share the most of the properties of
circulant
matrices. For instance, they commute with each other for the same $\lambda $%
. Thus, the construction in Theorem \ref{eightcirculant} can easily
be extended to $\lambda $-circulant matrices. The construction uses
Goethals-Seidel array and we propose four-block-circulant
constructions in Section $3$.

\section{Self-dual codes via short Kharaghani array\label{constructions}}

In this section two constructions for self-dual codes over
commutative Frobenius rings are given.\ Kharaghani gave some arrays
for orthogonal designs in \cite{kharaghani}. The first construction
uses the short Kharaghani array and the second uses a variation of
the array. However the conditions of duality appear to be strict we
obtain good examples of self-dual codes over the ring
$\mathbb{F}_{2}+u\mathbb{F}_{2}$ and the binary field
$\mathbb{F}_{2}$. The given methods can be used for any commutative
Frobenius ring. Throughout the section let $\mathcal{R}$ denote a
commutative Frobenius ring.\ In the following, ternary self-dual
codes are given as examples in order to demonstrate that the methods
work for non-binary alphabets. A ternary self-dual $\left[
n,k,d\right] _{3}$-code is said to be \emph{extremal} if $d$ meets
the upper bound $d\leq 3\lfloor \frac{n}{12}\rfloor +3$.

We need the following Lemma from \cite{kyp};

\begin{lemma}
\cite{kyp}\label{lemma} Let $A$ and $C$ be $\lambda $-circulant
matrices then $C^{\prime }=CR$ is a $\lambda $-reverse-circulant
matrix~and it is
symmetric. Moreover, $AC^{\prime }-C^{\prime }A^{T}=0$. Equivalently, $%
ARC^{T}-CRA^{T}=0$.
\end{lemma}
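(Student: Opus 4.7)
The plan is to prove the three assertions in sequence, with the third following by a short algebraic manipulation from the first two.

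First I would verify that $C' = CR$ is a $\lambda$-reverse-circulant matrix by direct computation of its entries. Writing $C = (c_{ij})$ with the defining relations of a $\lambda$-circulant matrix (namely $c_{i+1,j+1} = c_{ij}$ for $j < n$ and $c_{i+1,1} = \lambda c_{i,n}$), the entries of $CR$ are $(CR)_{ik} = \sum_j c_{ij} r_{j,k} = c_{i, n-k+1}$, because $R$ has a unique $1$ in column $k$, located at row $n-k+1$. Expressing $c_{i,n-k+1}$ as a function of $i+k \pmod{n}$ (with the appropriate $\lambda$-factor on the wrap-around positions) exhibits both the $\lambda$-reverse-circulant shift pattern and the symmetry $(CR)_{ik} = (CR)_{ki}$.

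Second, I would translate the symmetry of $C'$ into a usable algebraic identity. Since $R^T = R$ and $R^2 = I_n$, the equality $CR = (CR)^T = RC^T$ gives $C^T = RCR$; applying the same reasoning to $A$ (which is also $\lambda$-circulant of order $n$) yields $A^T = RAR$. Third, I would compute
\[
ARC^T - CRA^T = AR(RCR) - CR(RAR) = ACR - CAR = (AC - CA)R,
\]
using $R^2 = I_n$. The paragraph preceding the lemma records that $\lambda$-circulant matrices of the same $\lambda$ commute, so $AC = CA$ and the expression vanishes. For the equivalence with $AC' - C'A^T = 0$, I substitute $CR = RC^T$ in the first term: $AC' - C'A^T = ACR - CRA^T = A(RC^T) - CRA^T = ARC^T - CRA^T$, so the two formulations coincide.

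The main obstacle is the index bookkeeping in the first step: one has to be careful about the wrap-around entries where the factor $\lambda$ appears, since these are the only places where the reverse-circulant symmetry and shift pattern could fail. Once the structural properties of $CR$ are settled, the rest is routine matrix algebra relying only on $R^2 = I_n$, $R^T = R$, and the commutativity of $\lambda$-circulants.
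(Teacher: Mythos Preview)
The paper does not supply its own proof of this lemma; it is quoted from \cite{kyp} and used as a black box in the proofs of Theorems \ref{short} and \ref{variation}. There is therefore nothing in the paper to compare your argument against.

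That said, your proposal is correct. The entry computation $(CR)_{ik}=c_{i,n-k+1}$, together with the observation that this quantity depends only on $i+k$ (with a single $\lambda$-factor appearing exactly when $i+k>n+1$), simultaneously gives symmetry and the $\lambda$-reverse-circulant shift pattern. From symmetry and $R^{2}=I_{n}$, $R^{T}=R$ you obtain $C^{T}=RCR$ and $A^{T}=RAR$, and then $ARC^{T}-CRA^{T}=(AC-CA)R=0$ by the commutativity of $\lambda$-circulants noted at the end of Section~2. The equivalence with $AC'-C'A^{T}=0$ via $CR=RC^{T}$ is immediate. The wrap-around bookkeeping you flag is indeed the only delicate point, and it resolves cleanly because the condition $i+k>n+1$ is symmetric in $i$ and $k$.
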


\begin{theorem}
\label{short}(Construction I) Let $\mathcal{C}$ be the linear code over $%
\mathcal{R}$ of length $8n$ generated by the matrix in the following
form;
\begin{equation}
G:=\left( \enspace I_{4n}\enspace%
\begin{array}{|cccc}
A & B & CR & DR \\
-B & A & DR & -CR \\
-CR & -DR & A & B \\
-DR & CR & -B & A%
\end{array}%
\right)  \label{shortm}
\end{equation}%
where $A,B,C$ and $D$ are $\lambda $-circulant matrices over the ring $%
\mathcal{R}$ satisfying the conditions%
\begin{eqnarray*}
AA^{T}+BB^{T}+CC^{T}+DD^{T} &=&-I_{n}\text{ and} \\
AB^{T}-BA^{T}-CD^{T}+DC^{T} &=&0.
\end{eqnarray*}%
Then $\mathcal{C}$ is self-dual.
\end{theorem}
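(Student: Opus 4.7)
The plan is to verify directly that $G G^{T}=0$, which combined with $\operatorname{rank}(G)=4n$ (immediate from the $I_{4n}$ block on the left) will force $\mathcal{C}=\mathcal{C}^{\perp}$. Writing $G=(I_{4n}\mid M)$ where $M$ is the $4n\times 4n$ right-hand block in (\ref{shortm}), we have $GG^{T}=I_{4n}+MM^{T}$, so everything reduces to showing that $MM^{T}=-I_{4n}$. I would compute the sixteen $n\times n$ blocks of $MM^{T}$ one at a time, relying on two elementary facts. First, $R$ is a symmetric involution ($R=R^{T}$, $R^{2}=I_{n}$), so for any matrices $X,Y$ one has $(XR)(YR)^{T}=XY^{T}$. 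Second, Lemma \ref{lemma} applied to any two of the $\lambda$-circulants $A,B,C,D$ yields the identity $XRY^{T}-YRX^{T}=0$.

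For the four diagonal blocks, the two terms involving $R$ collapse via $RR^{T}=I_{n}$, so each diagonal block equals $AA^{T}+BB^{T}+CC^{T}+DD^{T}$, which is $-I_{n}$ by the first hypothesis. The off-diagonal blocks split into two types. Blocks where $R$ cancels on both contributing products (namely the $(1,2)$, $(3,4)$ blocks and their transposes) yield an expression of the form $\pm(AB^{T}-BA^{T}-CD^{T}+DC^{T})$, which vanishes by the second hypothesis. Blocks where exactly one factor $R$ survives in each contributing product (the $(1,3),(1,4),(2,3),(2,4)$ positions and their transposes) yield sums of differences like $ARC^{T}-CRA^{T}$ and $BRD^{T}-DRB^{T}$, each of which vanishes term by term by Lemma \ref{lemma}.

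The computations are mechanical, so the only real obstacle is the sign and block-position bookkeeping, in particular ensuring that in every off-diagonal block the cross terms pair up precisely into one of the two shapes above. Once each of the sixteen blocks is verified, $MM^{T}=-I_{4n}$, hence $GG^{T}=0$, so $\mathcal{C}\subseteq\mathcal{C}^{\perp}$; and since $\dim_{\mathcal{R}}\mathcal{C}=4n$ equals half the length, we conclude $\mathcal{C}=\mathcal{C}^{\perp}$.
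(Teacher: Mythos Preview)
Your proposal is correct and is essentially the paper's own proof: both reduce to checking $MM^{T}=-I_{4n}$ block by block, using $R^{2}=I_{n}$ to collapse the diagonal and $(1,2)$-type blocks to the two hypotheses, and invoking Lemma~\ref{lemma} to kill the remaining off-diagonal blocks where a single $R$ survives. The only cosmetic difference is that the paper names the four distinct block patterns $X,Y,Z,T$ and exploits the evident symmetry of $MM^{T}$, whereas you enumerate the positions directly.
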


\begin{proof}
Let $M$ be the right half of the matrix $G$ in \ref{shortm} then it
is enough to show that $MM^{T}=-I_{4n}$.
\begin{eqnarray*}
MM^{T} &=&\left(
\begin{array}{cccc}
A & B & CR & DR \\
-B & A & DR & -CR \\
-CR & -DR & A & B \\
-DR & CR & -B & A%
\end{array}%
\right) \left(
\begin{array}{cccc}
A^{T} & -B^{T} & -RC^{T} & -RD^{T} \\
B^{T} & A^{T} & -RD^{T} & RC^{T} \\
RC^{T} & RD^{T} & A^{T} & -B^{T} \\
RD^{T} & -RC^{T} & B^{T} & A^{T}%
\end{array}%
\right) \\
&=&\left(
\begin{array}{cccc}
X & Y & Z & T \\
-Y & X & -T & -Z \\
-Z & -T & X & Y \\
-T & Z & -Y & X%
\end{array}%
\right) ,\text{
\begin{tabular}{l}
$X=AA^{T}+BB^{T}+CC^{T}+DD^{T}$ \\
$Y=-AB^{T}+BA^{T}+CD^{T}-DC^{T}$ \\
$Z=-ACR-BDR+CRA^{T}+DRB^{T}$ \\
$T=-ADR+BCR-CRB^{T}+DRA^{T}.$%
\end{tabular}%
}
\end{eqnarray*}%
$Z=T=0$ by Lemma \ref{lemma} and $Y=0$, $X=-I_{n}$ by the assumption. Hence $%
MM^{T}=-I_{4n}$ which implies $GG^{T}=0$. Therefore, the code
$\mathcal{C}$ is self-orthogonal and self-dual due to its size.
\end{proof}

In the following example we obtain an extremal ternary self-dual
code of length $56$ by Theorem \ref{short}.

\begin{example}
Let $\mathcal{C}_{56}$ be the code over $\mathbb{F}_{3}$ obtained by
Construction I for $n=7,\ \lambda =1,$ $r_{A}=\left( 2200120\right)
,\
r_{B}=\left( 0020102\right) ,\ r_{C}=\left( 0010020\right) $ and $%
r_{D}=\left( 2111001\right) $. Then $\mathcal{C}_{56}$ is a
self-dual $\left[ 56,28,15\right] _{3}$-code. In other words, it is
an extremal ternary self-dual code of length\ $56$ with $68544$
words of weight $15$ and an automorphism group of order $2^{3}\times
7$.
\end{example}

Now we give a variation of the construction in the Theorem
\ref{short} as follows:

\begin{theorem}
\label{variation}(Construction II) Let $\lambda $ be an element of the ring $%
\mathcal{R}$ with $\lambda ^{2}=1$ and $\mathcal{C}$ be the linear
code over $\mathcal{R}$ of length $8n$ generated by the matrix;
\begin{equation}
G:=\left( \enspace I_{4n}\enspace%
\begin{array}{|cccc}
A & B & CR & DR \\
-B^{T} & A^{T} & DR & -CR \\
-CR & -DR & A & B \\
-DR & CR & -B^{T} & A^{T}%
\end{array}%
\right)  \label{variationm}
\end{equation}%
where $A,B,C$ and $D$ are $\lambda $-circulant matrices over
$\mathcal{R}$ satisfying the conditions
\begin{eqnarray*}
AA^{T}+BB^{T}+CC^{T}+DD^{T} &=&-I_{n}\text{ } \\
CD^{T}-DC^{T} &=&0\text{ and} \\
-ADR+BCR-CRB+DRA &=&0\text{.}
\end{eqnarray*}%
Then the code $\mathcal{C}$ is a self-dual code over $\mathcal{R}$.
\end{theorem}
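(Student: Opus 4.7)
The plan is to mimic the proof of Theorem \ref{short}. Writing $M$ for the right half of the generator matrix in (\ref{variationm}), self-duality reduces to proving $MM^T = -I_{4n}$: this gives $GG^T = I_{4n} + MM^T = 0$, so $\mathcal{C}$ is self-orthogonal, and self-duality then follows from the rank of $G$.

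I would expand $MM^T$ as a $4\times 4$ block matrix and simplify each of the sixteen blocks using three tools: the symmetry $R^T = R$ together with $R^2 = I_n$ (so $RC^T = CR$ and $RD^T = DR$); Lemma \ref{lemma}, which gives $ACR = CRA^T$ and $BDR = DRB^T$; and the commutativity of $\lambda$-circulants with the same $\lambda$. The distinctive feature of Construction II, compared with Construction I, is that $A^T$ and $B^T$ appear (in place of $A$ and $B$) in the second and fourth block rows of $M$. This is precisely where the new hypothesis $\lambda^2 = 1$ enters: it forces $A^T$ and $B^T$ to be $\lambda$-circulants themselves. As a consequence, $A^TA = AA^T$ and $B^TB = BB^T$, and Lemma \ref{lemma} may be applied with $A^T$ or $B^T$ in the role of $A$ or $B$, yielding further identities such as $A^TDR = DRA$, $B^TCR = CRB$, $A^TCR = CRA$, and $B^TDR = DRB$.

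With these reductions, the four diagonal blocks of $MM^T$ all collapse to $AA^T + BB^T + CC^T + DD^T = -I_n$ by the first hypothesis. The twelve off-diagonal blocks split into three families of four: the $(1,3), (3,1), (2,4), (4,2)$ blocks vanish using only Lemma \ref{lemma} (and, for $(2,4), (4,2)$, its extension to $A^T, B^T$); the $(1,2), (2,1), (3,4), (4,3)$ blocks reduce to combinations of $\pm(AB - BA)$ (or its transpose) and $\pm(CD^T - DC^T)$, which vanish by $\lambda$-circulant commutativity together with the second hypothesis; and the $(1,4), (2,3), (3,2), (4,1)$ blocks reduce, after applying the Lemma-type identities above, to $\pm(-ADR + BCR - CRB + DRA)$, which vanishes by the third hypothesis. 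Hence $MM^T = -I_{4n}$, and the claim follows.

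The principal obstacle I anticipate is bookkeeping: tracking signs and transposes carefully so that the Lemma is invoked legitimately (with $A^T, B^T$ in the roles of $A, B$, which is exactly where the hypothesis $\lambda^2 = 1$ is indispensable), and verifying that the asymmetric-looking third hypothesis, with its mix of $A$ and $A^T$ and of $B$ and $B^T$, is indeed precisely the single condition needed to kill all four of the $(1,4), (2,3), (3,2), (4,1)$ blocks simultaneously after the relevant rewriting.
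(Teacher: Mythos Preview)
Your proposal is correct and follows essentially the same route as the paper: reduce to $MM^{T}=-I_{4n}$, expand in $4\times 4$ blocks, and kill the off-diagonal blocks using Lemma~\ref{lemma}, commutativity of $\lambda$-circulants, and the three hypotheses, with $\lambda^{2}=1$ invoked exactly to make $A^{T},B^{T}$ again $\lambda$-circulant so the Lemma applies to the $(2,4)$- and $(4,2)$-type blocks. The only cosmetic difference is that the paper names five block quantities $X,Y,Z,T,U$ rather than grouping by position; note also that the $(2,3)$ and $(4,1)$ blocks are $\pm T^{T}$ rather than $\pm T$, but this does not affect the argument.
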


\begin{proof}
Let $M$ be the right half of the matrix $G$ in \ref{variationm} then
\begin{equation*}
MM^{T}=\left(
\begin{array}{cccc}
X & Y & Z & T \\
-Y^{T} & X & -T^{T} & -U \\
-Z & -T & X & Y \\
-T^{T} & U & -Y^{T} & X%
\end{array}%
\right) ,\text{%
\begin{tabular}{l}
$X=AA^{T}+BB^{T}+CC^{T}+DD^{T}$ \\
$Y=-AB+BA+CD^{T}-DC^{T}$ \\
$Z=-ACR-BDR+CRA^{T}+DRB^{T}$ \\
$T=-ADR+BCR-CRB+DRA$ \\
$U=B^{T}DR+A^{T}CR-DRB-CRA$%
\end{tabular}%
}
\end{equation*}%
By Lemma \ref{lemma} $Z=0$. The matrices $A^{T}$ and $B^{T}$ are
$\lambda ^{-1}$-circulant, they are $\lambda $-circulant since
$\lambda =\lambda
^{-1} $. Hence by Lemma \ref{lemma} $U=0$. $%
Y=-AB+BA+CD^{T}-DC^{T}=CD^{T}-DC^{T}$ since $\lambda $-circulant
matrices
commute. By the assumption $Y=0=T$ and $X=-I_{n}$. It follows that $%
MM^{T}=-I_{4n}$ which implies $\mathcal{C}$ is self-orthogonal. The code $%
\mathcal{C}$ is self-dual due to its size.
\end{proof}

There are only two extremal self-dual ternary codes of length $24$.
Those are the extended quadratic residue code and the Pless symmetry
code. In the following example we obtain both by Theorem
\ref{variation}.

\begin{example}
\label{ternary}Let $\mathcal{C}_{24}$ be the code over
$\mathbb{F}_{3}$ obtained by Construction II for $n=3,\ \lambda =2,$
$r_{A}=\left( 221\right) ,\ r_{B}=\left( 201\right) ,\ r_{C}=\left(
212\right) $ and $r_{D}=\left( 221\right) $. Let $\mathcal{D}_{24}$
be the code over $\mathbb{F}_{3}$ obtained by Construction II for
$n=3,\ \lambda =2,$ $r_{A}=\left( 200\right) ,\ r_{B}=\left(
112\right) ,\ r_{C}=\left( 102\right) $ and $r_{D}=\left(
110\right) $. Then $\mathcal{C}_{24}$ and $\mathcal{D}_{24}$ are self-dual $%
\left[ 24,12,9\right] _{3}$-codes. The code $\mathcal{C}_{24}$ is
Pless Symmetry code and the code $\mathcal{D}_{24}$ is the extended
quadratic residue code over $\mathbb{F}_{3}$ for $p=23$.
\end{example}

\begin{remark}
Two extremal self-dual $\left[ 24,12,9\right] _{3}$-codes in Example \ref%
{ternary} are also easily obtained by Theorem \ref{short}. On the
other hand, only the Pless symmetry code of parameters $\left[
24,12,9\right] _{3}$ could be constructed by Theorem
\ref{eightcirculant}. That exhibits the constructions proposed in
this section might be advantageous compared to Theorem
\ref{eightcirculant} even if the conditions are restrictive.
\end{remark}

\section{Computational results}

The constructions given in Section \ref{constructions} can be
applied to any commutative Frobenius ring. We focus on binary
self-dual codes obtained by the methods. The constructions applied
to the binary field $\mathbb{F}_{2}$
and the ring $\mathbb{F}_{2}+u\mathbb{F}_{2}$. The results are tabulated. $%
27 $ new extremal binary self-dual codes of length $68$ are obtained
as an application of Theorem \ref{short} and Theorem
\ref{variation}.

In \cite{conway} the possible weight enumerators for a self-dual Type I $%
\left[ 64,32,12\right] _{2}$-code were characterized as:
\begin{eqnarray*}
W_{64,1} &=&1+\left( 1312+16\beta \right) y^{12}+\left(
22016-64\beta
\right) y^{14}+\cdots ,14\leq \beta \leq 284, \\
W_{64,2} &=&1+\left( 1312+16\beta \right) y^{12}+\left(
23040-64\beta \right) y^{14}+\cdots ,0\leq \beta \leq 277.
\end{eqnarray*}%
Recently, codes with $\beta =$29, 59 and 74 in $W_{64,1}$
\cite{kyp}, a code with $\beta =$80 in $W_{64,2}$ were constructed
in \cite{karadenizfour}, together with these the existence of codes
is known for $\beta =$14, 18, 22,
25,\ 29, 32, 36, 39, 44, 46, 53,\ 59, 60, 64 and 74 in $W_{64,1}$ and for $%
\beta =$0, 1, 2, 4, 5,\ 6, 8, 9, 10, 12, 13,\ 14, 16,\ 17, 18, 20,
21,\ 22, 23, 24,\ 25,\ 28,\ 19,\ 30, 32,\ 33,\ 36, 37, 38, 40,\ 41,\
44, 48, 51,\ 52,\ 56, 58, 64, 72, 80,\ 88,\ 96, 104, 108,\ 112,\
114,\ 118,\ 120 and 184 in $W_{64,2}$.

\subsection{Computational results for the Construction I}

Results for the Construction I for $n=8$\ over $\mathbb{F}_{2}$ and for $n=4$%
\ over $\mathbb{F}_{2}+u\mathbb{F}_{2}$ are given. Self-dual Type I
$\left[ 64,32,12\right] _{2}$-codes are constructed and tabulated.

\begin{table}[htbp]
\caption{Construction I over $\mathbb{F}_{2}$ for $n=8$}
\label{tab:table1}
\begin{center}
\begin{tabular}{||c|c|c|c|c|c||c||}
\hline
$\mathcal{C}_{i}$ & $r_{A}$ & $r_{B}$ & $r_{C}$ & $r_{D}$ & $\left\vert Aut(%
\mathcal{C}_{i})\right\vert $ & $\beta $ in $W_{64,2}$ \\
\hline\hline
$\mathcal{C}_{1}$ & $\left( 10001101\right) $ & $\left( 00010000\right) $ & $%
\left( 01000110\right) $ & $\left( 01111010\right) $ & $2^{5}$ & $0$ \\
\hline
$\mathcal{C}_{2}$ & $\left( 10111001\right) $ & $\left( 01111101\right) $ & $%
\left( 01100001\right) $ & $\left( 01111111\right) $ & $2^{5}$ & $16$ \\
\hline
$\mathcal{C}_{3}$ & $\left( 10110011\right) $ & $\left( 01101001\right) $ & $%
\left( 11101101\right) $ & $\left( 01101111\right) $ & $2^{6}$ & $16$ \\
\hline
$\mathcal{C}_{4}$ & $\left( 00100011\right) $ & $\left( 11010010\right) $ & $%
\left( 11110011\right) $ & $\left( 01010011\right) $ & $2^{5}$ & $32$ \\
\hline
$\mathcal{C}_{5}$ & $\left( 11011000\right) $ & $\left( 00001110\right) $ & $%
\left( 11010100\right) $ & $\left( 11000000\right) $ & $2^{5}$ & $48$ \\
\hline
$\mathcal{C}_{6}$ & $\left( 11011000\right) $ & $\left( 11110001\right) $ & $%
\left( 01000111\right) $ & $\left( 01011100\right) $ & $2^{7}$ & $80$ \\
\hline
\end{tabular}%
\end{center}
\end{table}
For $n=8$ Theorem \ref{short} gives self-dual codes over the binary field $%
\mathbb{F}_{2}$ that are listed in Table \ref{tab:table1}.

\begin{table}[htbp]
\caption{Construction I over $\mathbb{F}_{2}+u\mathbb{F}_{2}$ for
$n=4$} \label{tab:table2}
\begin{center}
\begin{tabular}{||c|c|c|c|c|c|c||c||}
\hline
$\mathcal{D}_{i}$ & $\lambda $ & $r_{A}$ & $r_{B}$ & $r_{C}$ & $r_{D}$ & $%
\left\vert Aut(\mathcal{D}_{i})\right\vert $ & $\beta $ in $W_{64,2}$ \\
\hline\hline $\mathcal{D}_{1}$ & $3$ & $\left( 3,3,1,u\right) $ &
$\left( u,0,0,1\right) $
& $\left( 0,0,3,0\right) $ & $\left( 3,u,1,0\right) $ & $2^{5}$ & $0$ \\
\hline $\mathcal{D}_{2}$ & $3$ & $\left( 1,1,1,u\right) $ & $\left(
u,1,0,1\right) $
& $\left( u,3,3,0\right) $ & $\left( 0,u,1,1\right) $ & $2^{5}$ & $16$ \\
\hline $\mathcal{D}_{3}$ & $3$ & $\left( 3,1,3,u\right) $ & $\left(
0,1,u,1\right) $
& $\left( u,3,3,u\right) $ & $\left( u,u,1,1\right) $ & $2^{6}$ & $16$ \\
\hline $\mathcal{D}_{4}$ & $3$ & $\left( 3,1,3,0\right) $ & $\left(
0,1,u,1\right) $
& $\left( u,1,3,u\right) $ & $\left( u,0,1,1\right) $ & $2^{5}$ & $32$ \\
\hline $\mathcal{D}_{5}$ & $3$ & $\left( 1,3,1,0\right) $ & $\left(
u,1,0,1\right) $
& $\left( 0,3,3,u\right) $ & $\left( 0,0,3,1\right) $ & $2^{5}$ & $48$ \\
\hline $\mathcal{D}_{6}$ & $3$ & $\left( 3,1,3,u\right) $ & $\left(
u,3,0,3\right) $
& $\left( u,3,1,0\right) $ & $\left( u,0,1,3\right) $ & $2^{7}$ & $80$ \\
\hline
\end{tabular}%
\end{center}
\end{table}
In Table \ref{tab:table2} Construction II is applied to the ring $\mathbb{F}%
_{2}+u\mathbb{F}_{2}$ in order to construct self-dual codes of
length $32$.

\begin{remark}
The first extremal self-dual binary code of length $64$ with a
weight enumerator $\beta =80$ in $W_{64,2}$ is constructed in
\cite{karadenizfour} by using four circulant construction over
$\mathbb{F}_{2}+u\mathbb{F}_{2}$. In tables \ref{tab:table1} and
\ref{tab:table2} we give an alternative construction for the code by
the short Kharaghani array.
\end{remark}

\subsection{Computational results for the Construction II}

In this section we give the computational results for the
Construction II.

\begin{table}[h]
\caption{Construction II over $\mathbb{F}_{2}$ for $n=8$}
\label{tab:table3}
\begin{center}
\begin{tabular}{||c|c|c|c|c|c||c||}
\hline
$\mathcal{E}_{i}$ & $r_{A}$ & $r_{B}$ & $r_{C}$ & $r_{D}$ & $\left\vert Aut(%
\mathcal{E}_{i})\right\vert $ & $\beta $ in $W_{64,2}$ \\
\hline\hline
$\mathcal{E}_{1}$ & $\left( 00000010\right) $ & $\left( 01101100\right) $ & $%
\left( 01100111\right) $ & $\left( 10110000\right) $ & $2^{4}$ & $0$ \\
\hline
$\mathcal{E}_{2}$ & $\left( 11101100\right) $ & $\left( 10101110\right) $ & $%
\left( 10111110\right) $ & $\left( 01111010\right) $ & $2^{5}$ & $0$ \\
\hline
$\mathcal{E}_{3}$ & $\left( 01110110\right) $ & $\left( 10101000\right) $ & $%
\left( 11110010\right) $ & $\left( 11001001\right) $ & $2^{6}$ & $0$ \\
\hline
$\mathcal{E}_{4}$ & $\left( 10010011\right) $ & $\left( 01110101\right) $ & $%
\left( 01000110\right) $ & $\left( 10011110\right) $ & $2^{4}$ & $8$ \\
\hline
$\mathcal{E}_{5}$ & $\left( 01111000\right) $ & $\left( 01110101\right) $ & $%
\left( 10000001\right) $ & $\left( 00100100\right) $ & $2^{5}$ & $8$ \\
\hline
$\mathcal{E}_{6}$ & $\left( 00110100\right) $ & $\left( 01011010\right) $ & $%
\left( 00010011\right) $ & $\left( 01000011\right) $ & $2^{4}$ & $16$ \\
\hline
$\mathcal{E}_{7}$ & $\left( 00110001\right) $ & $\left( 01011010\right) $ & $%
\left( 01101011\right) $ & $\left( 11100011\right) $ & $2^{5}$ & $16$ \\
\hline
$\mathcal{E}_{8}$ & $\left( 01000110\right) $ & $\left( 11000000\right) $ & $%
\left( 10110100\right) $ & $\left( 10101001\right) $ & $2^{4}$ & $24$ \\
\hline
$\mathcal{E}_{9}$ & $\left( 10100011\right) $ & $\left( 11111101\right) $ & $%
\left( 11111001\right) $ & $\left( 01011010\right) $ & $2^{5}$ & $24$ \\
\hline $\mathcal{E}_{10}$ & $\left( 01000110\right) $ & $\left(
11001101\right) $ &
$\left( 10111110\right) $ & $\left( 00011100\right) $ & $2^{5}$ & $32$ \\
\hline $\mathcal{E}_{11}$ & $\left( 01100100\right) $ & $\left(
10100101\right) $ &
$\left( 10011111\right) $ & $\left( 10101100\right) $ & $2^{4}$ & $40$ \\
\hline $\mathcal{E}_{12}$ & $\left( 11110000\right) $ & $\left(
00010011\right) $ &
$\left( 11110001\right) $ & $\left( 10110101\right) $ & $2^{5}$ & $48$ \\
\hline
\end{tabular}%
\end{center}
\end{table}
In Table \ref{tab:table3} extremal self-dual Type I codes of length
$64$ are constructed.

\begin{table}[htbp]
\caption{Construction II over $\mathbb{F}_{2}+u\mathbb{F}_{2}$ for
$n=4$} \label{tab:table4}
\begin{center}
\begin{tabular}{||c|c|c|c|c|c|c||c||}
\hline
$\mathcal{F}_{i}$ & $\lambda $ & $r_{A}$ & $r_{B}$ & $r_{C}$ & $r_{D}$ & $%
\left\vert Aut(\mathcal{F}_{i})\right\vert $ & $\beta $ in $W_{64,2}$ \\
\hline\hline $\mathcal{F}_{1}$ & $3$ & $\left( 0,0,1,0\right) $ &
$\left( 3,0,3,u\right) $
& $\left( u,u,0,1\right) $ & $\left( 1,0,1,3\right) $ & $2^{4}$ & $0$ \\
\hline $\mathcal{F}_{2}$ & $3$ & $\left( 1,0,1,u\right) $ & $\left(
u,3,1,1\right) $
& $\left( 1,1,u,0\right) $ & $\left( 0,u,1,3\right) $ & $2^{4}$ & $8$ \\
\hline $\mathcal{F}_{3}$ & $3$ & $\left( 1,0,3,u\right) $ & $\left(
u,1,3,3\right) $
& $\left( 1,3,u,u\right) $ & $\left( 0,0,1,1\right) $ & $2^{5}$ & $8$ \\
\hline $\mathcal{F}_{4}$ & $1$ & $\left( 1,0,0,u\right) $ & $\left(
0,0,1,1\right) $
& $\left( 3,1,1,3\right) $ & $\left( 0,u,1,1\right) $ & $2^{4}$ & $16$ \\
\hline $\mathcal{F}_{5}$ & $3$ & $\left( 0,u,u,1\right) $ & $\left(
u,1,3,3\right) $
& $\left( 0,3,0,0\right) $ & $\left( 1,u,1,u\right) $ & $2^{5}$ & $16$ \\
\hline $\mathcal{F}_{6}$ & $3$ & $\left( u,0,1,u\right) $ & $\left(
1,1,3,1\right) $
& $\left( 3,3,1,u\right) $ & $\left( 3,1,1,u\right) $ & $2^{4}$ & $24$ \\
\hline $\mathcal{F}_{7}$ & $3$ & $\left( 3,u,1,0\right) $ & $\left(
u,1,1,3\right) $
& $\left( 1,3,0,0\right) $ & $\left( u,u,3,3\right) $ & $2^{5}$ & $24$ \\
\hline $\mathcal{F}_{8}$ & $1$ & $\left( 3,0,0,u\right) $ & $\left(
u,0,1,3\right) $
& $\left( 3,1,1,3\right) $ & $\left( u,u,1,3\right) $ & $2^{5}$ & $32$ \\
\hline $\mathcal{F}_{9}$ & $3$ & $\left( 0,0,1,u\right) $ & $\left(
1,1,1,1\right) $
& $\left( 1,3,1,u\right) $ & $\left( 3,3,3,u\right) $ & $2^{5}$ & $48$ \\
\hline
\end{tabular}%
\end{center}
\end{table}
Now we apply the construction in Theorem \ref{variation} to the ring $%
\mathbb{F}_{2}+u\mathbb{F}_{2}$ and give the results in Table \ref%
{tab:table4}.

Construction II has an advantage over Construction I. However, the
conditions are strict, Construction II \ allows us to narrow down
the search area. We may fix the matrices $C$ and $D$ which satisfy
$CD^{T}-DC^{T}=0$ and search for the circulant matrices $A$ and $B$
which satisy the remaining necessary consditions. We present that in
the following example:

\begin{example}
Let $n=4,\ \lambda =1+u,\ C$ and $D$ be $\lambda $-circulant
matrices with
first rows $r_{C}=\left( 1,1+u,u\right) $ and $r_{D}=\left( 0,0,1,1\right) $%
, respectively. Then $CD^{T}-DC^{T}=0$. So we may search for $\lambda $%
-circulant matrices $A$ and $B$ that satisfy $%
AA^{T}+BB^{T}+CC^{T}+DD^{T}=-I_{n}$ and $-ADR+BCR-CRB+DRA=0$. For
each pair
of such matrices a self-dual code of length $32$ over $\mathbb{F}_{2}+u%
\mathbb{F}_{2}$ will be obtained by Construction II. Let $A$ and $B$ be $%
\lambda $-circulant matrices with the following first rows%
\begin{equation*}
\begin{tabular}{|c|c|l|}
\hline $r_{A}$ & $r_{B}$ & $\beta $ in $W_{64,2}$ \\ \hline $\left(
1,0,1+u,u\right) $ & $\left( u,1,1+u,1+u\right) $ &
\multicolumn{1}{|c|}{$8$} \\ \hline $\left( 1+u,u,1,0\right) $ &
$\left( u,1,1,1+u\right) $ & \multicolumn{1}{|c|}{$24$} \\ \hline
\end{tabular}%
\end{equation*}%
then we obtain two extremal binary self-dual $\left[ 64,32,12\right] _{2}$%
-codes with automorphism groups of order $2^{5}$ as Gray images.
Note that this approach reduce the search field remarkably from
$4^{16}=4294967296$ to $4^{8}=65536$.
\end{example}

\begin{remark}
Although the constructions I and II have more strict conditions than
the construction in Theorem \ref{eightcirculant}, computational
results indicate
that they are superior over the method given in Theorem \ref{eightcirculant}%
. Since the only one Type I $\left[ 64,32,12\right] _{2}$-code with
weight enumerator $\beta =8$ in $W_{64,2}$ is obtained by applying
the construction
that uses Goethals-Seidel array to $\mathbb{F}_{2}$ and $\mathbb{F}_{2}+u%
\mathbb{F}_{2}$.
\end{remark}

\subsection{New extremal binary self-dual codes of length 68}

In \cite{dougherty1} the possible weight enumerators of a self-dual
$\left[ 68,34,12\right] _{2}$-code is characterized as follows:
\begin{eqnarray*}
W_{68,1} &=&1+\left( 442+4\beta \right) y^{12}+\left( 10864-8\beta
\right)
y^{14}+\cdots ,104\leq \beta \leq 1358\text{,} \\
W_{68,2} &=&1+\left( 442+4\beta \right) y^{12}+\left( 14960-8\beta
-256\gamma \right) y^{14}+\cdots
\end{eqnarray*}%
where $0\leq \gamma \leq 11$ and $14\gamma \leq \beta \leq
1870-32\gamma $. Recently, new codes in $W_{68,2}$ are obtained in
\cite{gurel,kyp} together with these, codes exist for $W_{68,2}$
when
\begin{eqnarray*}
\gamma  &=&0,\ \beta =\text{11,17,22,33,44},\dots ,\text{%
158,165,187,209,221,231,255,303} \\
\text{ or }\beta  &\in &\left\{ 2m|m=\text{17, 20, 88, 99, 102, 110,
119,
136, 165 or }80\leq m\leq 86\right\} ; \\
\gamma  &=&1,\ \beta =\text{49,57,59},\dots ,\text{160 or } \\
\beta  &\in &\left\{ 2m|m=\text{27, 28, 29, 95, 96 or }81\leq m\leq
90\right\} ; \\
\gamma  &=&2,\ \beta =\text{65,69,71,77,81,159,186 or }\beta \in
\left\{
2m|30\leq m\leq 68,\text{ }70\leq m\leq 91\right\} \text{ or} \\
\beta  &\in &\left\{ 2m+1|42\leq m\leq 69,\text{ }71\leq m\leq
77\right\} ;
\\
\gamma  &=&3,\ \beta =\text{%
101,103,105,107,115,117,119,121,123,125,127,129,131,133,} \\
&&\text{137,141,145,147,149,153,159,193 or } \\
\beta  &\in &\left\{ 2m\left\vert
\begin{array}{c}
m=\text{44,45,47,48,50,51,52,54,}\dots \text{,72,74,75,} \\
\text{77,}\dots \text{,84,86,87,88,89,90,91,92,94,95,97,98}%
\end{array}%
\right. \right\} ; \\
\gamma  &=&4\text{, }\beta \in \left\{ 2m\left\vert
\begin{array}{c}
m=\text{43,48,49,51,52,54,55,56,58,60,61,62,} \\
\text{64,65,67,}\dots \text{,71,75,}\dots \text{,78,80,87,97}%
\end{array}%
\right. \right\} ;\text{ } \\
\gamma  &=&6\text{ with }\beta \in \left\{ 2m|m=\text{69, 77, 78, 79, 81, 88}%
\right\} \text{.}
\end{eqnarray*}

In this section, we obtain the $27$ new codes with weight enumerators for $%
\gamma =0$ and $\beta =$174, 180, 182, 184, 186, 188, 190, 192, 194;
$\gamma =1$ and $\beta =$50, 52, 184, 186, 188; $\gamma =2$ and
$\beta =$184, 188,
190, 192, 194, 196, 198, 200, 206, 208; $\gamma =3$ and $\beta =$98, 106; $%
\gamma =4$ and $\beta =$196 in $W_{68,2}$.

\begin{theorem}
$($\cite{dougherty3}$)$ \label{extension}Let $\mathcal{C}$ be a
self-dual code over $\mathcal{R}$ of length $n$ and $G=(r_{i})$ be a
$k\times n$
generator matrix for $\mathcal{C}$, where $r_{i}$ is the $i$-th row of $G$, $%
1\leq i\leq k$. Let $c$ be a unit in $\mathcal{R}$ such that $c^{2}=1$ and $%
X $ be a vector in $\mathcal{R}^{n}$ with $\left\langle X,X\right\rangle =1$%
. Let $y_{i}=\left\langle r_{i},X\right\rangle $ for $1\leq i\leq
k$. Then
the following matrix%
\begin{equation*}
\left(
\begin{array}{cc|c}
1 & 0 & X \\ \hline
y_{1} & cy_{1} & r_{1} \\
\vdots & \vdots & \vdots \\
y_{k} & cy_{k} & r_{k}%
\end{array}%
\right) ,
\end{equation*}%
generates a self-dual code $\mathcal{C}^{\prime }$ over
$\mathcal{R}$ of length $n+2$.
\end{theorem}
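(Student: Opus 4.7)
The plan is to establish two facts about the code $\mathcal{C}'$ generated by the displayed $(k+1)\times(n+2)$ matrix: (i) $\mathcal{C}'$ is self-orthogonal, and (ii) $|\mathcal{C}'|=|\mathcal{R}|^{(n+2)/2}$. Since $\mathcal{R}$ is a commutative Frobenius ring, the standard duality identity $|\mathcal{C}'|\cdot|(\mathcal{C}')^{\perp}|=|\mathcal{R}|^{n+2}$ then forces $\mathcal{C}'=(\mathcal{C}')^{\perp}$.

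For (i) I would simply compute the Euclidean pairing of any two rows of the new generator matrix. Writing $R_{0}=(1,0,X)$ and $R_{i}=(y_{i},cy_{i},r_{i})$ for $1\le i\le k$, one gets
\[
\langle R_{0},R_{0}\rangle = 1+0+\langle X,X\rangle = 1+1,\qquad \langle R_{0},R_{i}\rangle = y_{i}+0+\langle X,r_{i}\rangle = y_{i}+y_{i},
\]
\[
\langle R_{i},R_{j}\rangle = y_{i}y_{j}+c^{2}y_{i}y_{j}+\langle r_{i},r_{j}\rangle = (1+c^{2})y_{i}y_{j},
\]
using $\langle r_{i},r_{j}\rangle=0$ since $\mathcal{C}$ is self-orthogonal. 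Each of these vanishes once $c^{2}=1$ (assumed) and $1+1=0$ in $\mathcal{R}$. The latter is the implicit characteristic-$2$ hypothesis that reconciles the normalizations $c^{2}=1$ and $\langle X,X\rangle=1$, and it holds for every ring of interest in this paper (in particular $\mathbb{F}_{2}$ and $\mathbb{F}_{2}+u\mathbb{F}_{2}$).

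For (ii), consider the $\mathcal{R}$-module homomorphism $\rho:\mathcal{C}'\to\mathcal{R}^{2}$ projecting onto the first two coordinates. A typical codeword $\beta R_{0}+\sum_{i}\alpha_{i}R_{i}$ has $\rho$-image $(\beta+z,\,cz)$ where $z=\sum_{i}\alpha_{i}y_{i}=\langle \sum_{i}\alpha_{i}r_{i},X\rangle$. As $\beta$ ranges over $\mathcal{R}$ and $z$ over the ideal $I_{X}:=\{\langle x,X\rangle:x\in\mathcal{C}\}$, the image equals $\mathcal{R}\times cI_{X}=\mathcal{R}\times I_{X}$, of size $|\mathcal{R}|\cdot|I_{X}|$. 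The kernel condition $cz=0$ forces $z=0$ (as $c$ is a unit) and then $\beta=0$, so $\ker\rho$ is identified with $\{x\in\mathcal{C}:\langle x,X\rangle=0\}=\mathcal{C}\cap X^{\perp}$. The first isomorphism theorem applied to $\mathcal{C}\to I_{X}$, $x\mapsto\langle x,X\rangle$, yields $|\mathcal{C}|=|\mathcal{C}\cap X^{\perp}|\cdot|I_{X}|$, whence
\[
|\mathcal{C}'|=|\ker\rho|\cdot|\operatorname{Im}\rho|=|\mathcal{C}\cap X^{\perp}|\cdot|\mathcal{R}|\cdot|I_{X}|=|\mathcal{R}|\cdot|\mathcal{C}|=|\mathcal{R}|^{(n+2)/2}.
\]

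The row pairings in (i) are purely mechanical; the delicate point is (ii), since over a ring that is not a field one cannot just count ``independent rows''. The right move is to decompose $\mathcal{C}'$ via $\rho$ and track module orders through the evaluation map $x\mapsto\langle x,X\rangle$, which is exactly where the Frobenius hypothesis on $\mathcal{R}$ is used, both for that order bookkeeping inside $\mathcal{C}$ and for the final passage from self-orthogonality plus correct cardinality to self-duality.
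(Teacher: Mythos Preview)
The paper does not prove this theorem; it is quoted from \cite{dougherty3} and invoked as a black box for the extensions in Table~\ref{tab:table5}. Your argument is correct and is the standard one: verify that every pair of rows of the new matrix is orthogonal, then show the generated module has cardinality $|\mathcal{R}|^{(n+2)/2}$ and conclude via the Frobenius identity $|\mathcal{C}'|\cdot|(\mathcal{C}')^{\perp}|=|\mathcal{R}|^{n+2}$.

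Your remark that, with the normalizations $\langle X,X\rangle=1$ and $c^{2}=1$ exactly as printed here, the orthogonality computations $\langle R_{0},R_{0}\rangle=2$, $\langle R_{0},R_{i}\rangle=2y_{i}$, $\langle R_{i},R_{j}\rangle=2y_{i}y_{j}$ force $1+1=0$ in $\mathcal{R}$ is accurate; this is harmless for the present paper, which only applies the theorem over $\mathbb{F}_{2}+u\mathbb{F}_{2}$. The cardinality step via the projection $\rho$ onto the first two coordinates, combined with the evaluation map $x\mapsto\langle x,X\rangle$ on $\mathcal{C}$, is exactly the right device over a ring that is not a field, and your order bookkeeping $|\mathcal{C}'|=|\mathcal{C}\cap X^{\perp}|\cdot|\mathcal{R}|\cdot|I_{X}|=|\mathcal{R}|\cdot|\mathcal{C}|$ is sound.
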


\begin{table}[tbph]
\caption{New extremal binary self-dual codes of length 68 by Theorem
Theorem \protect\ref{extension}} \label{tab:table5}
\begin{center}
\begin{tabular}{||c|c|c|c|c||c||}
\hline
$\mathcal{C}_{68,i}$ & $\mathcal{C}$ & $c$ & $X$ & $\gamma $ & $\beta $ \\
\hline\hline $\mathcal{C}_{68,1}$ & $\mathcal{D}_{6}$ & $1$ &
$\left( 13u11uu3331uu10133u330u31u010031\right) $ & $0$ & $174$ \\
\hline $\mathcal{C}_{68,2}$ & $\mathcal{D}_{6}$ & $3$ & $\left(
103u303u0001333u3u03uu1u000u3313\right) $ & $0$ & $180$ \\ \hline
$\mathcal{C}_{68,3}$ & $\mathcal{D}_{6}$ & $3$ & $\left(
u1331u01333u31113101100310u1uu33\right) $ & $0$ & $182$ \\ \hline
$\mathcal{C}_{68,4}$ & $\mathcal{D}_{6}$ & $3$ & $\left(
001u3010uu1u00313101100310u1uu33\right) $ & $0$ & $184$ \\ \hline
$\mathcal{C}_{68,5}$ & $\mathcal{D}_{6}$ & $3$ & $\left(
1u303u1uu00311103uu1uu3000uu1313\right) $ & $0$ & $186$ \\ \hline
$\mathcal{C}_{68,6}$ & $\mathcal{D}_{6}$ & $3$ & $\left(
301u1u1u00u1311u3uu30u10uu0u1333\right) $ & $0$ & $188$ \\ \hline
$\mathcal{C}_{68,7}$ & $\mathcal{D}_{6}$ & $3$ & $\left(
3u13u333100u03011uu1333u1u110uu0\right) $ & $0$ & $190$ \\ \hline
$\mathcal{C}_{68,8}$ & $\mathcal{D}_{6}$ & $3$ & $\left(
310110310uu1u33011331u00u3300001\right) $ & $0$ & $192$ \\ \hline
$\mathcal{C}_{68,9}$ & $\mathcal{D}_{6}$ & $3$ & $\left(
101010100003111u3uu1u03u000u3331\right) $ & $0$ & $194$ \\ \hline
$\mathcal{C}_{68,10}$ & $\mathcal{F}_{1}$ & $3$ & $\left(
uu00333011u1330uu0u10u0u013u1100\right) $ & $1$ & $50$ \\ \hline
$\mathcal{C}_{68,11}$ & $\mathcal{F}_{1}$ & $3$ & $\left(
u0uu333013u113uu00u3uuu00330310u\right) $ & $1$ & $52$ \\ \hline
$\mathcal{C}_{68,12}$ & $\mathcal{D}_{6}$ & $1$ & $\left(
31013uu3133uu30311011uu33u03uu11\right) $ & $1$ & $184$ \\ \hline
$\mathcal{C}_{68,13}$ & $\mathcal{D}_{6}$ & $3$ & $\left(
330330u3113uu30311u13u013003uu11\right) $ & $1$ & $186$ \\ \hline
$\mathcal{C}_{68,14}$ & $\mathcal{D}_{6}$ & $1$ & $\left(
330130u3333uu3u3310330u33uu10011\right) $ & $1$ & $188$ \\ \hline
$\mathcal{C}_{68,15}$ & $\mathcal{D}_{6}$ & $1$ & $\left(
3u3u1u100uu1133u301u0u3113131uu0\right) $ & $2$ & $184$ \\ \hline
$\mathcal{C}_{68,16}$ & $\mathcal{D}_{6}$ & $3$ & $\left(
3u1u3u10uuu1133u301uuu3131113uuu\right) $ & $2$ & $188$ \\ \hline
$\mathcal{C}_{68,17}$ & $\mathcal{D}_{6}$ & $3$ & $\left(
011u01u3330u1001330310u13u010011\right) $ & $2$ & $190$ \\ \hline
$\mathcal{C}_{68,18}$ & $\mathcal{D}_{6}$ & $1$ & $\left(
1u1u10300u01311u1u100u11333130uu\right) $ & $2$ & $192$ \\ \hline
$\mathcal{C}_{68,19}$ & $\mathcal{D}_{6}$ & $3$ & $\left(
0310010111uu10u131u310u310010011\right) $ & $2$ & $194$ \\ \hline
$\mathcal{C}_{68,20}$ & $\mathcal{D}_{6}$ & $1$ & $\left(
10301010000333301010u01313111u00\right) $ & $2$ & $196$ \\ \hline
$\mathcal{C}_{68,21}$ & $\mathcal{D}_{6}$ & $3$ & $\left(
u310u1u3130u1uu113u130u11uu30u33\right) $ & $2$ & $198$ \\ \hline
$\mathcal{C}_{68,22}$ & $\mathcal{D}_{6}$ & $1$ & $\left(
u110u10331u0100111u3100310010u33\right) $ & $2$ & $200$ \\ \hline
$\mathcal{C}_{68,23}$ & $\mathcal{D}_{6}$ & $1$ & $\left(
0130u3u311uu1uu1310330013u030011\right) $ & $2$ & $206$ \\ \hline
$\mathcal{C}_{68,24}$ & $\mathcal{D}_{6}$ & $3$ & $\left(
301u1u1u00u1311u3010u01333133uu0\right) $ & $2$ & $208$ \\ \hline
$\mathcal{C}_{68,25}$ & $\mathcal{D}_{1}$ & $3$ & $\left(
u3330030u10uu313010001uu1030u0u3\right) $ & $3$ & $98$ \\ \hline
$\mathcal{C}_{68,26}$ & $\mathcal{D}_{1}$ & $3$ & $\left(
1030uu1130u31311101u13u03030uu30\right) $ & $3$ & $106$ \\ \hline
$\mathcal{C}_{68,27}$ & $\mathcal{D}_{6}$ & $3$ & $\left(
u310u30313u030u311u130u130u10033\right) $ & $4$ & $196$ \\ \hline
\end{tabular}%
\end{center}
\end{table}

In Table \ref{tab:table5} the codes are generated over $\mathbb{F}_{2}+u%
\mathbb{F}_{2}$ by the matrices of the following form;
\[
G^{\prime }=\left(
\begin{array}{cc|c}
1 & 0 & X \\ \hline
y_{1} & cy_{1} &  \\
\vdots & \vdots & G \\
y_{k} & cy_{k} &
\end{array}%
\right)
\]%
where $G$ is the generating matrix of the code $\mathcal{C}$ with
the
specified circulant matrices. Then $\mathcal{C}_{68,i}$ is the binary image $%
\varphi \left( G^{\prime }\right) $ of the extension.

\begin{theorem}
The existence of extremal self-dual binary codes is known for $492$
parameters in $W_{68,2}.$
\end{theorem}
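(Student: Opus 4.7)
The plan is to verify the count $492$ by direct bookkeeping: enumerate the pairs $(\gamma,\beta)$ realized in $W_{68,2}$ listed in the preamble to this subsection, add the $27$ new pairs produced in Table \ref{tab:table5}, and check the total. No new code construction is needed at this step; the statement is purely an arithmetic consequence of the data already assembled in Section 4.3, so the proof is essentially a careful tally.

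First I would organize the previously--known parameters by their $\gamma$--value, converting every listed description into an explicit finite set. For $\gamma = 0$ this means counting the arithmetic progression $\beta = 11,17,22,33,44,\ldots,158,165,187,209,221,231,255,303$, together with the set $\{2m : m = 17, 20, 88, 99, 102, 110, 119, 136, 165\} \cup \{2m : 80 \le m \le 86\}$; analogous unpackings are required for $\gamma = 1,2,3,4,6$. For each $\gamma$ I would tabulate the total $N_\gamma$ of distinct admissible $\beta$'s, being careful because the progressions and the ``$2m$'' sets can in principle overlap.

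Next I would add the new values contributed by this paper, read off directly from Table \ref{tab:table5}: nine values for $\gamma=0$, three for $\gamma=1$, ten for $\gamma=2$, two for $\gamma=3$, and one for $\gamma=4$, which is exactly the $27$ new codes $\mathcal{C}_{68,1},\ldots,\mathcal{C}_{68,27}$. Call these increments $\Delta N_\gamma$. By the wording of the paragraph introducing the new codes, each such $(\gamma,\beta)$ is new, i.e.\ absent from the corresponding $\gamma$--list above, so the contribution is additive with no double counting. The updated totals are then $N_\gamma + \Delta N_\gamma$, and summing over $\gamma \in \{0,1,2,3,4,6\}$ gives the total count, which should come out to $492$.

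The main obstacle is purely clerical: the $\gamma=2$ and $\gamma=3$ lines describe long ranges of the form $\{2m : a \le m \le b\}$ and $\{2m+1 : a \le m \le b\}$, and one must be scrupulous to (i) not count a given $\beta$ twice when it appears both in an explicit enumeration and in such a range, and (ii) not accidentally count a $\beta$ already present in the old list as ``new'' when it also shows up in Table \ref{tab:table5}. Once these two consistency checks pass, the identity $\sum_\gamma (N_\gamma + \Delta N_\gamma) = 492$ is a straightforward verification and completes the proof.
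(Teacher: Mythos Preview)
Your approach is exactly the right one, and indeed it is the only possible one: in the paper this theorem is stated without any proof at all, being simply a numerical summary of the previously known parameter list plus the $27$ codes in Table~\ref{tab:table5}. So there is nothing to compare against beyond the bookkeeping you describe.

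That said, your own tally contains a slip. You write ``nine values for $\gamma=0$, three for $\gamma=1$, ten for $\gamma=2$, two for $\gamma=3$, and one for $\gamma=4$, which is exactly the $27$ new codes,'' but $9+3+10+2+1=25$, not $27$. The list of new codes in the paragraph preceding Table~\ref{tab:table5} gives \emph{five} values for $\gamma=1$, namely $\beta=50,52,184,186,188$, and this restores the total $9+5+10+2+1=27$. Since your proof strategy is ``be scrupulous about clerical counting,'' this kind of arithmetic must be right before the final sum to $492$ can be trusted; you will want to redo the $\Delta N_\gamma$ line and then carry out the full enumeration of the $N_\gamma$'s with the same care.
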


\begin{remark}
The binary generator matrices of the\ codes in Table
\ref{tab:table5} are available online at \cite{web}. Those have
automorphism groups of order $2$.
\end{remark}

\section{Conclusion}

Most of the constructions for self-dual codes are used to reduce the
search field. In this paper we use the short Kharaghani array and
determine the necessary conditions for duality. The constructions
could be used over different alphabets such as $\mathbb{Z}_{4}$; the
integers modulo $4$. One may suggest such constructions by using
various arrays. By such methods we may attempt to construct codes as
the extremal binary self-dual Type II codes of length $72$ which is
a long standing open problem.

\end{document}